\newtheorem{theorem}{Theorem}[section]
\newtheorem{lemma}[theorem]{Lemma}
\newtheorem{proposition}[theorem]{Proposition}
\newtheorem{corollary}[theorem]{Corollary}
\newtheorem{problem}[theorem]{Problem}
\theoremstyle{remark}
\theoremstyle{definition}
\newtheorem{remark}[theorem]{Remark}
\newcommand{\omu}{\overline{\mu}}
\newcommand{\e}{\varepsilon}
\newcommand{\de}{\delta}
\begin{document}
\title[The Dabkowski-Sahi invariant and $4$-moves for links]{The Dabkowski-Sahi invariant and $4$-moves for links}

% first author information
\author[Haruko A. Miyazawa]{Haruko A. Miyazawa}
\address{Institute for Mathematics and Computer Science, Tsuda University,
2-1-1 Tsuda-machi, Kodaira, Tokyo 187-8577, Japan}
\curraddr{}
\email{aida@tsuda.ac.jp}
\thanks{}

% second author information
\author[Kodai Wada]{Kodai Wada}
\address{Department of Mathematics, Graduate School of Science, Osaka University, Toyonaka, Osaka 560-0043, Japan}
\curraddr{}
\email{ko-wada@cr.math.sci.osaka-u.ac.jp}
\thanks{The second author was supported by JSPS KAKENHI Grant Number JP19J00006.}

% third author information
\author[Akira Yasuhara]{Akira Yasuhara}
\address{Faculty of Commerce, Waseda University, 1-6-1 Nishi-Waseda, Shinjuku-ku, Tokyo 169-8050, Japan}
\curraddr{}
\email{yasuhara@waseda.jp}
\thanks{The third author was supported by JSPS KAKENHI Grant Number JP17K05264 and a Waseda University Grant for Special Research Projects (Project number: 2020C-175).}

% mathesubject
\subjclass[2010]{Primary 57M25, 57M27; Secondary 57M05}

% keywords
\keywords{$4$-move, link, welded link, link-homotopy, %self-crossing virtualization, 
Magnus expansion}

% grant 
%\thanks{}

%\date{\today}

% dedicate
%\dedicatory{}

%%%%%%%%%% abstract %%%%%%%%%%
\begin{abstract}
Dabkowski and Sahi defined an invariant of a link in the $3$-sphere, which is preserved under $4$-moves. 
This invariant is a quotient of the fundamental group of the complement of the link. 
It is generally difficult to distinguish the Dabkowski-Sahi invariants of given links. 
In this paper, we give a necessary condition for the existence of an isomorphism between the Dabkowski-Sahi invariant of a link and that of the corresponding trivial link. 
Using this condition, we provide a practical obstruction to a link to be trivial up to $4$-moves. 
\end{abstract}

\maketitle

%%%%%%%%%% Introduction %%%%%%%%%%
\section{Introduction} 
A {\em $4$-move} is a local move as shown in Figure~\ref{4-move}. 
Two links are {\em $4$-equivalent} if they are related by a finite sequence of $4$-moves and isotopies. 
In the late 1970s, Nakanishi first systematically studied the $4$-moves, see~\cite{P16} for a nice survey of this topic. 
In 1979, he conjectured that every knot is $4$-equivalent to the trivial knot~\cite[Problem~1.59(3)(a)]{Kirby}. 
This conjecture remains open, although it has been verified for several classes of knots, e.g. $2$-bridge knots, closed 3-braid knots, and knots with $12$ or less crossings (cf.~\cite{DJKS,Kirby,NS}). 

\begin{figure}[htbp]
  \begin{center}
    \begin{overpic}[width=10cm]{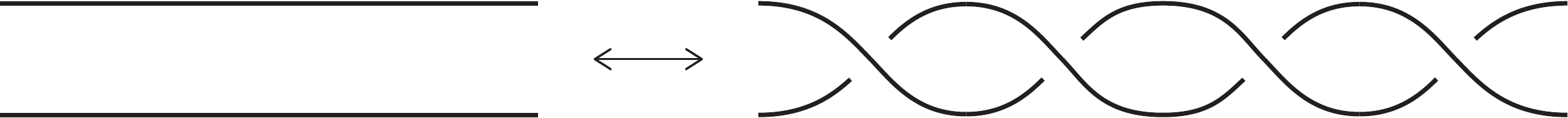}
    \end{overpic}
  \end{center}
  \caption{$4$-move}
  \label{4-move}
\end{figure}

Since the linking number modulo~$2$ is an invariant of $4$-equivalence, not every link with two or more components is $4$-equivalent to a trivial link. 
In 1985, Kawauchi proposed the following question: if two links are link-homotopic, then are they $4$-equivalent?~\cite[Problem~1.59(3)(b)]{Kirby}. 
Here, two links are {\em link-homotopic} if they are related by a finite sequence of self-crossing changes and isotopies. 
In~\cite[Theorem~3]{DP04}, Dabkowski and Przytycki gave a negative answer to the question in the case of links with three or more components by using the {\em $4$th Burnside group} of links introduced in~\cite{DP02}. 
However, the question in the case of $2$-component links still remains open. 
Since two $2$-component links are link-homotopic if and only if they have the same linking number, this open question can be rephrased as follows.

\begin{problem}[Dabkowski and Przytycki~\cite{DP04}]\label{prob}
Is it true that two $2$-component links with the same linking number are $4$-equivalent?
\end{problem}

This problem has been shown to be true for several classes of $2$-component links, e.g. $2$-algebraic links, closed $3$-braid links, links with $11$ or less crossings, and alternating links with $12$ crossings (cf.~\cite{DJKS,P16}). 
On the other hand, there is a growing belief that the problem is false. 
To give a counterexample to the problem, we need invariants of $4$-equivalence. 
While the $4$th Burnside group of links is a powerful invariant of $4$-equivalence, it cannot distinguish any $2$-component link from the trivial one~\cite[Theorem~4]{DP04} . 
This motivated Dabkowski and Sahi to define a new invariant of $4$-equivalence in~\cite{DS}. 

Given a link $L$ in the $3$-sphere $S^{3}$, Dabkowski and Sahi~\cite{DS} defined a quotient $\mathcal{R}_{4}(L)$ of the fundamental group of the complement $S^{3}\setminus L$, which is invariant under $4$-moves. 
Using this invariant $\mathcal{R}_{4}$, they reproved in~\cite[Corollary 2.19]{DS} the result of Dabkowski and Przytycki~\cite[Theorem~3]{DP04}. 
We can expect that $\mathcal{R}_{4}$ has a stronger potential to address Problem~\ref{prob} than the $4$th Burnside group of links. 
However, it is generally difficult to distinguish the invariants $\mathcal{R}_{4}$ of given links. 
Therefore it is important to find a simple method to distinguish these invariants. 
In this paper, we give a necessary condition for the existence of an isomorphism between the invariant $\mathcal{R}_{4}$ of a given link and that of the corresponding trivial link (Lemma~\ref{lem-subset}). 
This condition provides a practical obstruction to trivializing links by $4$-moves (Theorem~\ref{thm}). 

The same result of Theorem~\ref{thm} also holds for welded links. 
As its application, we show that there is a $2$-component welded link with linking number~$0$ which is not $4$-equivalent to the trivial $2$-component link (Proposition~\ref{prop-Whitehead}). 
This gives a negative answer to Problem~\ref{prob} for $2$-component welded links.

%%%%%%%%%% The Dabkowski-sahi invariant %%%%%%%%%%
\section{The Dabkowski-Sahi invariant of links}
\label{sec-DSinv}
Throughout the paper, for a set $X$, we denote by $F(X)$ the free group on the alphabet $X^{\pm}=\{x,x^{-1}\mid x\in X\}$. 
Let $q\geq1$ be an integer. 
For a group $G$, we denote by $\Gamma_{q}G$ the $q$th term of the lower central series of $G$, and by $N_{q}G$ the $q$th nilpotent quotient $G/\Gamma_{q}G$. 
For two normal subgroups $H$ and $I$ of $G$, we denote by $H\cdot I$ the subgroup of $G$ generated by all $ab$ with $a\in H$ and $b\in I$. 

Let $L$ be an $m$-component link in $S^{3}$. 
All links in this paper are assumed to be ordered and oriented. 
We denote by $G(L)$ the fundamental group of the complement $S^{3}\setminus L$. 
Let $\langle X \mid R\rangle$ be a Wirtinger presentation of $G(L)$. 
Then we define a set of words in $F(X)$ as 
\[
S(X)=\{(awbw^{-1})^{2}(wbw^{-1}a)^{-2}~\vline~a,b\in X^{\pm},w\in F(X)\}\subset F(X). 
\]
The {\em Dabkowski-Sahi invariant} of $L$, defined in~\cite{DS}, is the group given by the presentation $\langle X\mid R,S(X)\rangle$. 
We denote it by $\mathcal{R}_{4}(L)$. 
It is shown in~\cite[Proposition~2.3]{DS} that $\mathcal{R}_{4}(L)$ is an invariant of $4$-equivalence. 

In this paper, we introduce a quotient of $\mathcal{R}_{4}(L)$. 
For an integer $n\geq1$, 
we define a set of words in $F(X)$ as 
\[
W_{n}(X)=\{w^{n}~\vline~w\in F(X)\}\subset F(X).
\] 
The {\em $n$th reduced Dabkowski-Sahi invariant} of $L$ is the group given by the presentation $\langle X\mid R,S(X),W_{n}(X)\rangle$.  
We denote it by $\mathcal{R}_{4}^{n}(L)$. 
The Dabkowski-Sahi invariant $\mathcal{R}_{4}(L)$ and the reduced one $\mathcal{R}_{4}^{n}(L)$ are not always finite, but the nilpotent quotient $N_{q}\mathcal{R}_{4}^{n}(L)$ is always a finite group for any $q\geq1$ (cf.~\cite[Chapter~2]{V}). 
Since $\mathcal{R}_{4}(L)$ is invariant under $4$-moves, 
the following result is obtained immediately. 

\begin{proposition}
The group $N_{q}\mathcal{R}_{4}^{n}(L)$ and its cardinality $|N_{q}\mathcal{R}_{4}^{n}(L)|$ are invariant under $4$-moves.
\end{proposition}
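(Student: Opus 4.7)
The plan is to derive the statement directly from the $4$-move invariance of $\mathcal{R}_{4}(L)$ (\cite[Proposition~2.3]{DS}), by recognising $N_{q}\mathcal{R}_{4}^{n}(L)$ as a functorial quotient of $\mathcal{R}_{4}(L)$. First I would identify $\mathcal{R}_{4}^{n}(L)$ with a canonical quotient of $\mathcal{R}_{4}(L)$: from the presentation $\langle X \mid R, S(X), W_{n}(X)\rangle$ and the fact that the image of $X$ generates $\mathcal{R}_{4}(L)$, the kernel of the natural surjection $\mathcal{R}_{4}(L)\to\mathcal{R}_{4}^{n}(L)$ equals the subgroup of $\mathcal{R}_{4}(L)$ generated by the $n$th powers of all its elements. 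This is a verbal, and in particular fully invariant, subgroup, so $\mathcal{R}_{4}^{n}(L)$ depends only on $\mathcal{R}_{4}(L)$ as an abstract group, not on the chosen Wirtinger presentation.

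Next, for a single $4$-move relating $L$ and $L'$, \cite[Proposition~2.3]{DS} supplies an isomorphism $\mathcal{R}_{4}(L)\cong\mathcal{R}_{4}(L')$. Since the subgroup of $n$th powers and each term $\Gamma_{q}$ of the lower central series are fully invariant, this isomorphism descends to $\mathcal{R}_{4}^{n}(L)\cong\mathcal{R}_{4}^{n}(L')$ and then to $N_{q}\mathcal{R}_{4}^{n}(L)\cong N_{q}\mathcal{R}_{4}^{n}(L')$. Concatenating over a finite sequence of $4$-moves and isotopies yields the invariance for $4$-equivalent links, and the equality of cardinalities follows; these cardinalities are finite by the remark preceding the statement.

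There is essentially no technical obstacle. The only bookkeeping step worth flagging is that the normal closure of $W_{n}(X)$ inside $\mathcal{R}_{4}(L)$ coincides with the verbal subgroup of $n$th powers, which uses only that every element of $\mathcal{R}_{4}(L)$ lifts to $F(X)$ and that a conjugate of an $n$th power is again an $n$th power. Once this is observed, the proposition is a formal consequence of the functoriality of verbal quotients and nilpotent quotients applied to the isomorphism from \cite[Proposition~2.3]{DS}.
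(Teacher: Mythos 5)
Your proposal is correct and follows essentially the same route as the paper, which simply notes that the result is immediate from the $4$-move invariance of $\mathcal{R}_{4}(L)$; you merely make explicit the underlying point that the kernel of $\mathcal{R}_{4}(L)\to\mathcal{R}_{4}^{n}(L)$ is the verbal subgroup of $n$th powers and that this, together with the terms $\Gamma_{q}$, is fully invariant, so the isomorphism from \cite[Proposition~2.3]{DS} descends to $N_{q}\mathcal{R}_{4}^{n}$.
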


Now consider a diagram of $L$. 
For each $i\in\{1,\ldots, m\}$, we choose one arc of the $i$th component of the diagram and denote it by $x_{i1}$. 
As shown in Figure~\ref{schematic}, let $x_{i2},x_{i3},\ldots,x_{ir_{i}}$ be the other arcs of the $i$th component in turn with respect to the orientation, 
where $r_{i}$ denotes the number of arcs of the $i$th component. 
In the figure, $u_{ij}\in\{x_{kl}\}$ denotes the arc which separates $x_{ij}$ and $x_{ij+1}$. 
Let $\e_{ij}$ be the sign of the crossing among $x_{ij},u_{ij}$ and $x_{ij+1}$. 
%Then $G(L)\cong\langle X\mid R\rangle$ has the presentation 
%\[
%\left\langle x_{ij}\ (1\leq i\leq m,\,1\leq j\leq t_{i})~\vline
%\begin{array}{ll}
%r_{ij}=x_{ij+1}^{-1}u_{ij}^{-\e_{ij}}x_{ij}u_{ij}^{\e_{ij}}\ (1\leq j\leq t_{i}-1), \vspace{0.5em} \\
%r_{it_{i}}=x_{i1}^{-1}u_{it_{i}}^{-\e_{it_{i}}}x_{it_{i}}u_{it_{i}}^{\e_{it_{i}}}
%\end{array}
%\right\rangle. 
%\]
%
For $1\leq i\leq m$ and $1\leq j\leq r_{i}$, we put 
\[
v_{ij}=u_{i1}^{\e_{i1}}u_{i2}^{\e_{i2}}\cdots u_{ij}^{\e_{ij}}\in F(X), 
\] 
where $X=\{x_{ij}\mid 1\leq i\leq m,\,1\leq j\leq r_{i}\}$. 
In particular, we denote by $\lambda_{i}$ the word $v_{ir_{i}}$. 
By the geometric construction of the Wirtinger presentation, 
the word $\lambda_{i}$ corresponds to an $i$th longitude of $L$. 
In this sense, we call $\lambda_{i}$ an {\em $i$th longitude word} of $L$. 
Let $A=\{\alpha_{1},\alpha_{2},\ldots,\alpha_{m}\}$. 
Following~\cite{M57}, we define a sequence of homomorphisms $\eta_{q}:F(X)\rightarrow F(A)$ inductively by 
\begin{eqnarray*}
\eta_{1}(x_{ij})=\alpha_{i},& \\ 
\eta_{q+1}(x_{i1})=\alpha_{i}, & 
\mbox{and} \hspace{1em}
\eta_{q+1}(x_{ij+1})=\eta_{q}(v_{ij}^{-1})\,\alpha_{i}\,\eta_{q}(v_{ij}). 
\end{eqnarray*}

\begin{figure}[htb]
  \begin{center}
    \begin{overpic}[width=11cm]{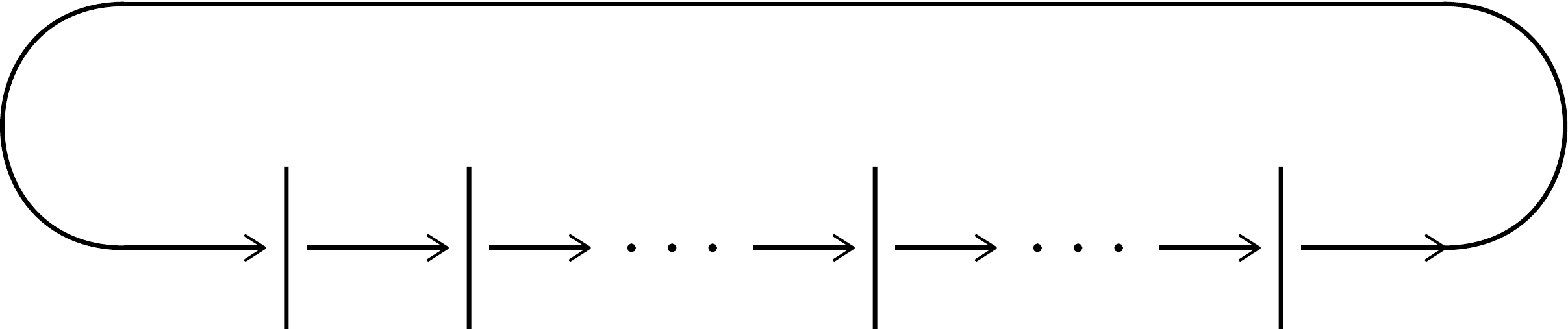}
      \put(33,24){$x_{i1}$}
      \put(68,24){$x_{i2}$}
      \put(100,24){$x_{i3}$}
      \put(154,24){$x_{ij}$}
      \put(180,24){$x_{ij+1}$}
      \put(232,24){$x_{ir_{i}}$}
      %%%
      \put(52,-12){$u_{i1}$}
      \put(88,-12){$u_{i2}$}
      \put(169,-12){$u_{ij}$}
      \put(250,-12){$u_{ir_{i}}$}
    \end{overpic}
  \end{center}
  \vspace{1em}
  \caption{A schematic illustration of the $i$th component}
  \label{schematic}
\end{figure}

In~\cite[Theorem~4]{M57}, Milnor used the homomorphism $\eta_{q}$ to give an explicit presentation of the nilpotent quotient $N_{q}G(L)$ as follows: 
\[
N_{q}G(L)
\cong \langle \alpha_{1},\ldots,\alpha_{m}\mid [\alpha_{1},\eta_{q}(\lambda_{1})], \ldots, [\alpha_{m},\eta_{q}(\lambda_{m})], \Gamma_{q}F(A)\rangle. 
\] 
For the groups $N_{q}\mathcal{R}_{4}(L)$ and $N_{q}\mathcal{R}_{4}^{n}(L)$, we obtain the following presentations. 

\begin{lemma}\label{lem-pres}
The group $N_{q}\mathcal{R}_{4}(L)$ has the presentation 
\[
\langle \alpha_{1},\ldots,\alpha_{m}\mid [\alpha_{1},\eta_{q}(\lambda_{1})], \ldots, [\alpha_{m},\eta_{q}(\lambda_{m})], S(A), \Gamma_{q}F(A)\rangle, 
\]
and the group $N_{q}\mathcal{R}_{4}^{n}(L)$ has the presentation
\[
\langle \alpha_{1},\ldots,\alpha_{m}\mid [\alpha_{1},\eta_{q}(\lambda_{1})], \ldots, [\alpha_{m},\eta_{q}(\lambda_{m})], S(A), W_{n}(A), \Gamma_{q}F(A)\rangle. 
\] 
\end{lemma}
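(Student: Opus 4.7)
The plan is to reduce the claim to Milnor's presentation theorem plus a comparison of normal closures in $F(A)$. Since $\mathcal{R}_{4}(L)=\langle X\mid R,S(X)\rangle$ is obtained from the Wirtinger presentation by adjoining the relators $S(X)$, its $q$th nilpotent quotient is
\[
N_{q}\mathcal{R}_{4}(L)\;=\;F(X)/\langle\!\langle R,\,S(X),\,\Gamma_{q}F(X)\rangle\!\rangle.
\]
Milnor's theorem, recalled in the paper, identifies $F(X)/\langle\!\langle R,\Gamma_{q}F(X)\rangle\!\rangle$ with $\langle A\mid[\alpha_{1},\eta_{q}(\lambda_{1})],\ldots,[\alpha_{m},\eta_{q}(\lambda_{m})],\Gamma_{q}F(A)\rangle$ via the map induced by $\eta_{q}$. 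Hence $N_{q}\mathcal{R}_{4}(L)$ has the presentation stated in the lemma, but with $S(A)$ replaced by $\eta_{q}(S(X))$; the real task is therefore to prove the equality of normal closures in $F(A)$:
\[
\langle\!\langle\eta_{q}(S(X))\rangle\!\rangle\;=\;\langle\!\langle S(A)\rangle\!\rangle.
\]

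For the inclusion $\subseteq$, observe that for every $a\in X^{\pm}$ the element $\eta_{q}(a)$ is by construction a conjugate of some $\alpha_{i}^{\pm 1}$: I will write $\eta_{q}(a)=u\alpha_{i}^{\epsilon}u^{-1}$ and $\eta_{q}(b)=v\alpha_{j}^{\delta}v^{-1}$, and set $w''=u^{-1}\eta_{q}(w)v$. A direct rewriting then yields
\[
\eta_{q}(awbw^{-1})=u\bigl(\alpha_{i}^{\epsilon}w''\alpha_{j}^{\delta}w''^{-1}\bigr)u^{-1},\qquad \eta_{q}(wbw^{-1}a)=u\bigl(w''\alpha_{j}^{\delta}w''^{-1}\alpha_{i}^{\epsilon}\bigr)u^{-1},
\]
so that the image of the $S(X)$-relator $(awbw^{-1})^{2}(wbw^{-1}a)^{-2}$ is a conjugate in $F(A)$ of an element of $S(A)$. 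For the reverse inclusion $\supseteq$, given $\alpha_{i}^{\epsilon},\alpha_{j}^{\delta}\in A^{\pm}$ and $w'\in F(A)$, I will choose $a=x_{i1}^{\epsilon}$ and $b=x_{j1}^{\delta}$ (so that $\eta_{q}(a)=\alpha_{i}^{\epsilon}$, $\eta_{q}(b)=\alpha_{j}^{\delta}$) and any preimage $w\in F(X)$ of $w'$ under $\eta_{q}$, which exists because $\eta_{q}$ is surjective. The image under $\eta_{q}$ of the resulting element of $S(X)$ is then exactly the prescribed element of $S(A)$.

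The presentation of $N_{q}\mathcal{R}_{4}^{n}(L)$ is obtained along identical lines: by surjectivity of $\eta_{q}$ one has $\eta_{q}(W_{n}(X))=W_{n}(A)$ on the nose, so adjoining $W_{n}(X)$ on the $F(X)$ side and $W_{n}(A)$ on the $F(A)$ side produces the same quotient. The main obstacle is the inclusion $\subseteq$ above: a priori $\eta_{q}(S(X))$ involves arbitrary conjugates $\eta_{q}(a),\eta_{q}(b)$ of the generators $\alpha_{i}^{\pm 1}$ rather than the generators themselves, so it looks as though $\eta_{q}(S(X))$ could give strictly more relations than $S(A)$. The conjugation trick by $u$ displayed above is precisely what absorbs the outer conjugate into a single overall conjugation and leaves a genuine $S(A)$-relator in the middle.
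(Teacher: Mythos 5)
Your proposal is correct and follows essentially the same route as the paper's (omitted) argument: both reduce to Milnor's presentation of $N_{q}G(L)$ via $\eta_{q}$ and then hinge on the same two observations, namely that each $\eta_{q}(a)$ is a conjugate of some $\alpha_{i}^{\pm1}$ so that the outer conjugation can be absorbed to show $\langle\langle\eta_{q}(S(X))\rangle\rangle=\langle\langle S(A)\rangle\rangle$, and that surjectivity of $\eta_{q}$ gives $\eta_{q}(W_{n}(X))=W_{n}(A)$. The only cosmetic difference is that you invoke the fact that Milnor's isomorphism is induced by $\eta_{q}$ as a black box, where the paper reruns Milnor's substitution of $\eta_{q}(x_{ij})$ for $x_{ij}$ explicitly; this is exactly the ``minor modification'' the authors allude to.
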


The proof of this lemma is a minor modification of that of~\cite[Theorem~4 ]{M57}, and hence we omit it.

\begin{lemma}\label{lem-subset}
Let $L$ be an $m$-component link and $\lambda_{i}$ its $i$th longitude word $(1\leq i\leq m)$. 
Let $O$ be the trivial $m$-component link. 
If $\mathcal{R}_{4}(L)$ and $\mathcal{R}_{4}(O)$ are isomorphic, 
then the set $\{\left[\alpha_{i},\eta_{q}(\lambda_{i})\right]~\vline~1\leq i\leq m\}$ is a subset of the normal subgroup $\langle\langle S(A)\rangle\rangle\cdot \langle\langle W_{n}(A)\rangle\rangle\cdot\Gamma_{q}F(A)$ of $F(A)$, 
where $\langle\langle\ \cdot\ \rangle\rangle$ denotes the normal closure. 
\end{lemma}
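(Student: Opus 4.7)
The plan is to compare the presentations of $N_{q}\mathcal{R}_{4}^{n}(L)$ and $N_{q}\mathcal{R}_{4}^{n}(O)$ supplied by Lemma~\ref{lem-pres}, and then to use a finite cardinality argument to force the longitudinal relators of $L$ to be redundant.

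First I would specialize Lemma~\ref{lem-pres} to the trivial link $O$, using its standard diagram (one arc per component). Then every longitude word is empty, so $\eta_{q}(\lambda_{i})=1$ for each $i$, and the lemma gives
\[
N_{q}\mathcal{R}_{4}^{n}(O)\;\cong\; F(A)/N_{0},\qquad N_{0}:=\langle\langle S(A)\rangle\rangle\cdot\langle\langle W_{n}(A)\rangle\rangle\cdot\Gamma_{q}F(A),
\]
where I use the standard fact that the product of normal subgroups of $F(A)$ equals the normal closure of their union. For $L$ itself, Lemma~\ref{lem-pres} yields $N_{q}\mathcal{R}_{4}^{n}(L)\cong F(A)/N_{1}$, where $N_{1}$ is obtained from $N_{0}$ by adjoining the commutators $[\alpha_{i},\eta_{q}(\lambda_{i})]$ to the normally generating set. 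The inclusion $N_{0}\subseteq N_{1}$ yields a canonical surjection $\pi\colon F(A)/N_{0}\twoheadrightarrow F(A)/N_{1}$ with $\alpha_{i}\mapsto\alpha_{i}$.

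Next I would use the hypothesis to show that the domain and codomain of $\pi$ have equal (finite) order. For any group $G$, the set of $n$-th powers $\{g^{n}\mid g\in G\}$ is characteristic, and hence so is its normal closure; the subgroup $\Gamma_{q}G$ is characteristic as well. Consequently, an abstract isomorphism $\mathcal{R}_{4}(L)\cong\mathcal{R}_{4}(O)$ descends first to $\mathcal{R}_{4}^{n}(L)\cong\mathcal{R}_{4}^{n}(O)$ and then to $N_{q}\mathcal{R}_{4}^{n}(L)\cong N_{q}\mathcal{R}_{4}^{n}(O)$. By the finiteness noted in Section~\ref{sec-DSinv}, the map $\pi$ is a surjection between finite groups of equal cardinality, hence an isomorphism. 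Therefore $N_{0}=N_{1}$, and in particular each $[\alpha_{i},\eta_{q}(\lambda_{i})]$ lies in $N_{0}$, which is the desired conclusion. I do not anticipate a serious obstacle beyond the bookkeeping in this characteristic-subgroup reduction and the specialization of Lemma~\ref{lem-pres} to the standard diagram of $O$.
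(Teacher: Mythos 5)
Your proposal is correct and follows essentially the same route as the paper: both use the presentations from Lemma~\ref{lem-pres} for $L$ and $O$, the natural surjection between the resulting quotients of $F(A)$, and the finiteness of $N_{q}\mathcal{R}_{4}^{n}$ together with the assumed isomorphism to force that surjection to be injective, hence the longitudinal commutators lie in $\langle\langle S(A)\rangle\rangle\cdot\langle\langle W_{n}(A)\rangle\rangle\cdot\Gamma_{q}F(A)$. Your explicit justification via verbal (fully invariant) subgroups that the isomorphism $\mathcal{R}_{4}(L)\cong\mathcal{R}_{4}(O)$ descends to $N_{q}\mathcal{R}_{4}^{n}(L)\cong N_{q}\mathcal{R}_{4}^{n}(O)$ merely fills in a step the paper states without comment.
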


\begin{proof}
The proof is parallel to that of \cite[Lemma~2.1]{MWY}. 
By Lemma~\ref{lem-pres}, we have 
\[
N_{q}\mathcal{R}_{4}^{n}(L)\cong
\langle \alpha_{1},\ldots,\alpha_{m}\mid [\alpha_{1},\eta_{q}(\lambda_{1})], \ldots, [\alpha_{m},\eta_{q}(\lambda_{m})], S(A), W_{n}(A), \Gamma_{q}F(A)\rangle 
\]
and
\[
N_{q}\mathcal{R}_{4}^{n}(O)\cong
\langle \alpha_{1},\ldots,\alpha_{m}\mid S(A), W_{n}(A), \Gamma_{q}F(A)\rangle. 
\]
Consider the following sequence of two natural projections $\psi$ and $\phi$: 
\begin{eqnarray*}
F(A)&\overset{\psi}{\longrightarrow}&
\langle \alpha_{1},\ldots,\alpha_{m}\mid S(A), W_{n}(A), \Gamma_{q}F(A)\rangle \\
&\overset{\phi}{\longrightarrow}&
\langle \alpha_{1},\ldots,\alpha_{m}\mid [\alpha_{1},\eta_{q}(\lambda_{1})], \ldots, [\alpha_{m},\eta_{q}(\lambda_{m})], S(A), W_{n}(A), \Gamma_{q}F(A)\rangle. 
\end{eqnarray*}
Then it follows that $\psi([\alpha_{i},\eta_{q}(\lambda_{i})])\in\ker{\phi}$ for any $i\in\{1,\ldots, m\}$. 
On the other hand, we have 
\[
|N_{q}\mathcal{R}_{4}^{n}(O)|
=|\langle \alpha_{1},\ldots,\alpha_{m}\mid S(A), W_{n}(A), \Gamma_{q}F(A)\rangle|
=|N_{q}\mathcal{R}_{4}^{n}(L)|
\times\left|\ker{\phi}\right|. 
\]
Since $\mathcal{R}_{4}(L)$ and $\mathcal{R}_{4}(O)$ are isomorphic, $N_{q}\mathcal{R}_{4}^{n}(L)$ and $N_{q}\mathcal{R}_{4}^{n}(O)$ are also isomorphic, and furthermore, they are finite groups. 
This implies that $|\ker{\phi}|$ must be equal to~$1$. 
Hence $[\alpha_{i},\eta_{q}(\lambda_{i})]\in\langle\langle S(A)\rangle\rangle\cdot \langle\langle W_{n}(A)\rangle\rangle\cdot\Gamma_{q}F(A)$. 
\end{proof}

%%%%%%%%%% Necessary conditions %%%%%%%%%%
\section{Obstruction to trivializing links by $4$-moves} 
Denote by $\mathbb{Z}_{2}$ the cyclic group $\mathbb{Z}/2\mathbb{Z}$ of order $2$. 
The {\em Magnus $\mathbb{Z}_{2}$-expansion $E^{(2)}$} is a homomorphism from $F(A)$ to the ring of the formal power series in non-commutative variables $X_{1},\ldots,X_{m}$ with coefficients in $\mathbb{Z}_{2}$ 
defined by
\[
E^{(2)}(\alpha_{i})=1+X_{i}
\hspace{1em} \mbox{and} \hspace{1em}
E^{(2)}(\alpha_{i}^{-1})=1+X_{i}+X_{i}^{2}+X_{i}^{3}+\cdots 
\]
for $1\leq i\leq m$.

\begin{theorem}\label{thm}
Let $n\geq8$ be a power of $2$ and $q\geq5$ an integer. 
Let $L$ be an $m$-component link and $\lambda_{i}$ its $i$th longitude word $(1\leq i\leq m)$. 
Assume that 
\[
E^{(2)}\left(\left[\alpha_{i},\eta_{q}(\lambda_{i})\right]\right)=1+
\sum_{j\geq 1}\sum_{\substack{k_{1},\ldots,k_{j}\\ \hspace{1em}\in\{1,\ldots, m\}}} 
c(k_{1},\ldots,k_{j})X_{k_{1}}\cdots X_{k_{j}}
\] 
for certain coefficients $c(k_{1},\ldots,k_{j})\in\mathbb{Z}_{2}$.
If $L$ is $4$-equivalent to the trivial $m$-component link, 
then the following hold.

\begin{enumerate}
\item $c(k)=c(k,k)=c(k,k,k)=c(k,k,k,k)=0$ for any $k$. 
\item $c(k,l,l)=c(l,k,k)=c(k,k,l)=c(l,l,k)$ for any $k\neq l$. 
\item $c(k,l,k,l)=c(l,k,l,k)$, $c(k,k,l,l)=c(l,l,k,k)$, and $c(k,k,k,l)=c(k,k,l,k)$ \\
$=c(k,l,k,k)=c(l,k,k,k)$ for any $k\neq l$. 
\item If $c(k_{1},k_{2},k_{3},k_{4})\neq 0$ and the cardinality $|\{k_{1},k_{2},k_{3},k_{4}\}|=3$, then $k_{1}=k_{2}$, $k_{2}=k_{3}$, or $k_{3}=k_{4}$ for any $k_{1},k_{2},k_{3},k_{4}$. 
\item $c(k_{1},\ldots,k_{j})=0$ for any pairwise distinct indices $k_{1},\ldots,k_{j}$ with $j<\min\{n,q\}$.\footnote{
Since $n$ and $q$ can be chosen arbitrarily large, the condition $j<\min\{n,q\}$ is no real restriction.} 
\end{enumerate}
\end{theorem}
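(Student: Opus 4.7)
By Lemma~\ref{lem-subset}, if $L$ is $4$-equivalent to the trivial $m$-component link $O$, then every commutator $[\alpha_{i},\eta_{q}(\lambda_{i})]$ lies in the normal subgroup
\[
N := \langle\langle S(A)\rangle\rangle \cdot \langle\langle W_{n}(A)\rangle\rangle \cdot \Gamma_{q}F(A).
\]
It therefore suffices to show that for every $r\in N$ the coefficients of $E^{(2)}(r)$ in degrees $j<\min(n,q)$ satisfy (i)--(v). The plan is to dispose of the contributions of the three factors of $N$ separately, and to read off (i)--(v) from the commutator structure hidden in the defining relators of $S(A)$.

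For the factor $\Gamma_{q}F(A)$, the expansion $E^{(2)}(w)-1$ lies in degrees $\geq q$, so it contributes nothing in degrees $<q$. For the factor $\langle\langle W_{n}(A)\rangle\rangle$, since $n=2^{k}$ is a power of $2$, the characteristic-$2$ identity $(1+p)^{2^{k}}=1+p^{2^{k}}$ (valid in any noncommutative $\mathbb{Z}_{2}$-algebra) yields $E^{(2)}(w^{n})-1=(E^{(2)}(w)-1)^{n}$, which has degree $\geq n$. This degree bound is preserved under conjugation and product, so this factor contributes nothing in degrees $<n$, hence nothing in degrees $<\min(n,q)$. Only $\langle\langle S(A)\rangle\rangle$ can therefore influence coefficients in the relevant range.

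The crucial simplification for $\langle\langle S(A)\rangle\rangle$ is to rewrite each defining relator as a commutator: with $u:=awbw^{-1}$ and $y:=wbw^{-1}$, the identity $wbw^{-1}a=yuy^{-1}$ gives $(awbw^{-1})^{2}(wbw^{-1}a)^{-2}=[u^{2},y]$. Setting $A^{*}:=(E^{(2)}(u)-1)^{2}$ (of degree $\geq 2$ in characteristic $2$) and $B^{*}:=E^{(2)}(y)-1$ (of degree $\geq 1$), the commutator identity
\[
(1+A^{*})(1+B^{*})(1+A^{*})^{-1}(1+B^{*})^{-1}=1+[A^{*},B^{*}](1+A^{*})^{-1}(1+B^{*})^{-1}
\]
together with a degree count in characteristic $2$ yields
\[
E^{(2)}([u^{2},y])-1 \equiv A^{*}B^{*}+B^{*}A^{*}+A^{*}(B^{*})^{2}+B^{*}A^{*}B^{*} \pmod{\deg\geq 5}.
\]
In particular, every $r\in\langle\langle S(A)\rangle\rangle$ has $E^{(2)}(r)-1$ of degree $\geq 3$, giving $c(k)=c(k,k)=0$ in~(i) at once. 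Substituting $a=\alpha_{k}^{\pm 1}$ and $b=\alpha_{l}^{\pm 1}$, varying $w\in F(A)$, and using the formula $E^{(2)}(wbw^{-1})=1+E^{(2)}(w)X_{l}E^{(2)}(w)^{-1}$ for the conjugated generator, one expands $A^{*}B^{*}+B^{*}A^{*}$ in degrees $3$ and $4$ and $A^{*}(B^{*})^{2}+B^{*}A^{*}B^{*}$ in degree $4$. Closing under product and conjugation (which preserves the degree filtration) produces the entire image of $\langle\langle S(A)\rangle\rangle$ under $E^{(2)}$ modulo degree $\min(n,q)$; the resulting coefficient patterns yield the symmetries in (ii)--(iv), the remaining vanishings in (i), and the pairwise-distinctness vanishing (v).

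The main obstacle is this last combinatorial step. One must enumerate, for arbitrary $w\in F(A)$, the monomials of each degree $\leq \min(n,q)-1$ that can actually appear and verify that only those compatible with (i)--(v) arise. The symmetries in (ii)--(iii) follow from the $A^{*}\leftrightarrow B^{*}$ structure in $A^{*}B^{*}+B^{*}A^{*}$, while (iv) requires a more delicate analysis of $A^{*}(B^{*})^{2}+B^{*}A^{*}B^{*}$ to rule out coefficient patterns with three distinct indices and no adjacent repetition. Condition (v) reduces to showing that every monomial contributed by $E^{(2)}$ of a generator of $S(A)$ carries a repeated letter in degrees below $\min(n,q)$; tracking this repetition through arbitrary conjugations in the normal closure, where the word $w$ can introduce new letters, is the technically heavy part of the argument.
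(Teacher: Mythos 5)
Your reduction is sound and mirrors the paper's: Lemma~\ref{lem-subset} gives $[\alpha_{i},\eta_{q}(\lambda_{i})]=stu$ with $s\in\langle\langle S(A)\rangle\rangle$, $t\in\langle\langle W_{n}(A)\rangle\rangle$, $u\in\Gamma_{q}F(A)$; your treatment of $t$ (Frobenius in characteristic $2$, degree $\geq n$) and of $u$ (degree $\geq q$) is exactly Lemmas~\ref{lem-power} and~\ref{lem-lcs}, and your rewriting of a generator of $S(A)$ as the commutator $[u^{2},y]$ with $u=awbw^{-1}$, $y=wbw^{-1}$, together with the expansion $A^{*}B^{*}+B^{*}A^{*}+A^{*}(B^{*})^{2}+B^{*}A^{*}B^{*}$ modulo degree $5$, is a legitimate alternative packaging of the paper's direct computation and already yields $c(k)=c(k,k)=0$. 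But the proof stops there: statements (i) beyond degree $2$, and (ii)--(v), are precisely the content of the ``last combinatorial step'' that you yourself flag as the main obstacle and do not carry out. The heart of the paper's argument is the explicit determination of the degree-$3$ and degree-$4$ part of $E^{(2)}(g_{j}^{-1}s_{j}^{\e_{j}}g_{j})$: one must substitute $E^{(2)}(w)=1+F$, $E^{(2)}(w^{-1})=1+\overline{F}$, track the terms $I_{kl}(\e,\de)$ coming from the signs $\e,\de$, and --- crucially --- the extra degree-$4$ terms $X_{k}X_{l}^{2}G_{1}+\cdots+G_{1}X_{l}^{2}X_{k}$ produced by the conjugation by $g_{j}$, and then check that this specific expression satisfies (i)--(iv). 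Your appeal to ``closing under product and conjugation (which preserves the degree filtration)'' does not suffice, because conjugation genuinely introduces new degree-$4$ monomials whose compatibility with (i)--(iv) must be verified; asserting that ``the resulting coefficient patterns yield'' the claimed symmetries is the conclusion, not an argument.

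For (v) the situation is similar, and your assessment of where the difficulty lies is off. The paper proves (v) by working modulo the two-sided ideal $\mathcal{O}(2)$ generated by monomials containing some $X_{r}$ twice: since $(1+F)(1+\overline{F})=1$ forces $F+\overline{F}+F\overline{F}=0$, every monomial of $E^{(2)}(\alpha_{k}^{\e}w\alpha_{l}^{\de}w^{-1})-1$ contains $X_{k}$ or $X_{l}$, and then the fourth-power structure (all binomial coefficients $\binom{4}{r}$, $0<r<4$, being even, plus a pigeonhole on the letters $X_{k},X_{l}$) gives $E^{(2)}(s_{j}^{\e_{j}})\equiv1\pmod{\mathcal{O}(2)}$; because $\mathcal{O}(2)$ is an ideal, conjugates and products are handled for free, so the passage to the full normal closure is trivial rather than ``technically heavy.'' None of these observations appear in your write-up, and without them (or the equivalent explicit bookkeeping) the vanishing of $c(k_{1},\ldots,k_{j})$ for pairwise distinct indices is not established --- note that for a general word $v$ the expansion $E^{(2)}(v^{4})$ is \emph{not} congruent to $1$ modulo $\mathcal{O}(2)$, so the special form $\alpha_{k}^{\e}w\alpha_{l}^{\de}w^{-1}$ of the generators must actually be used. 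In short, the framework is correct, but the computational core of the theorem --- the only part where the specific shape of $S(A)$ enters --- is missing.
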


To show this theorem, we prepare the following two lemmas. 

\begin{lemma}\label{lem-power}
Let $n$ be a positive power of $2$. 
If $t\in \langle\langle W_{n}(A)\rangle\rangle$, then $E^{(2)}(t)=1+\mathcal{D}(n)$, 
where $\mathcal{D}(n)$ denotes the terms of degree $\geq n$. 
\end{lemma}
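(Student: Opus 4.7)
The plan is to introduce the set $F_{n} := \{x \in F(A) \mid E^{(2)}(x) - 1 \in \mathcal{D}(n)\}$ and show that it is a normal subgroup of $F(A)$ containing every $n$-th power; since $\langle\langle W_{n}(A)\rangle\rangle$ is by definition the smallest normal subgroup of $F(A)$ containing $W_{n}(A)$, the inclusion $\langle\langle W_{n}(A)\rangle\rangle \subseteq F_{n}$ will give the lemma.

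Checking that $F_{n}$ is a subgroup of $F(A)$ is a routine computation in the power-series ring $\mathbb{Z}_{2}\langle\langle X_{1},\ldots,X_{m}\rangle\rangle$, relying on the fact that $E^{(2)}$ sends $F(A)$ into the group of units $1 + \mathcal{D}(1)$. The inclusion $(1+\mathcal{D}(n))(1+\mathcal{D}(n)) \subseteq 1+\mathcal{D}(n)$ is clear by expansion, and the power-series inverse of any $1+D$ with $D \in \mathcal{D}(n)$ is $\sum_{i\geq0}(-D)^{i} \in 1+\mathcal{D}(n)$. For normality, if $E^{(2)}(x) = 1+D$ with $D \in \mathcal{D}(n)$ and $g \in F(A)$, then
$$E^{(2)}(gxg^{-1}) - 1 = E^{(2)}(g)\cdot D\cdot E^{(2)}(g)^{-1},$$
and since multiplying an element of $\mathcal{D}(n)$ on either side by a series with constant term $1$ keeps the product in $\mathcal{D}(n)$, we obtain $gxg^{-1} \in F_{n}$.

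The crucial step is to verify that $w^{n} \in F_{n}$ for every $w \in F(A)$, and this is exactly where the hypothesis that $n$ is a power of $2$ enters. Writing $E^{(2)}(w) = 1 + Y$ with $Y \in \mathcal{D}(1)$, I would invoke the characteristic-two identity $(1+Y)^{2} = 1 + 2Y + Y^{2} = 1 + Y^{2}$, which is valid even in the non-commutative setting because $1$ commutes with $Y$. Iterating $k$ times for $n = 2^{k}$ yields $E^{(2)}(w^{n}) = (1+Y)^{2^{k}} = 1 + Y^{2^{k}}$, and $Y^{2^{k}} \in \mathcal{D}(2^{k}) = \mathcal{D}(n)$ since $Y \in \mathcal{D}(1)$.

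I do not expect any substantive obstacle: the lemma is essentially a Frobenius-type observation in characteristic $2$. The only point worth flagging is that the same argument fails for general $n$, since for $n$ not a power of $2$ the binomial expansion $(1+Y)^{n}$ has surviving low-degree cross terms modulo $2$; this is why the hypothesis on $n$ is essential and why the statement records $n$ as a power of $2$.
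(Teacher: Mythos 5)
Your argument is correct, and it reaches the paper's conclusion by a slightly different route. The shared core is the same observation: for $n=2^{k}$ one has $E^{(2)}(w^{n})=(1+Y)^{n}=1+Y^{n}\in 1+\mathcal{D}(n)$; you derive this by iterating the Frobenius identity $(1+Y)^{2}=1+Y^{2}$, while the paper cites the parity of the binomial coefficients $\binom{n}{r}$ for $0<r<n$ -- these are the same fact in two guises. Where you genuinely diverge is in how you pass from single $n$-th powers to an arbitrary element of $\langle\langle W_{n}(A)\rangle\rangle$: the paper notes that this normal closure is a \emph{verbal} subgroup (conjugates and inverses of $n$-th powers are again $n$-th powers), so any $t$ is simply a product $\prod_{j}w_{j}^{n}$, and then multiplies the expansions. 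You instead introduce $F_{n}=\{x\in F(A)\mid E^{(2)}(x)-1\in\mathcal{D}(n)\}$, check it is a normal subgroup (using that $\mathcal{D}(n)$ is a two-sided ideal and that $E^{(2)}$ takes values in the units $1+\mathcal{D}(1)$), and conclude by minimality of the normal closure. Your framing is a bit longer but more self-contained, since it never needs the verbal-subgroup observation; the paper's is shorter once that observation is granted. Both are complete proofs of the lemma.
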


\begin{proof}
Since $\langle\langle W_{n}(A)\rangle\rangle$ is a verbal subgroup, 
the element $t$ can be written in the form $\prod_{j}w_{j}^{n}$ for some $w_{j}\in F(A)$. 
For each $w_{j}$, the Magnus $\mathbb{Z}_{2}$-expansion $E^{(2)}\left(w_{j}\right)$ has the form $1+\mathcal{D}(1)$. 
Since $n$ is a power of $2$, all the binomial coefficients $\binom{n}{r}$ are even for $0<r<n$. 
Therefore 
\[E^{(2)}\left(w_{j}^{n}\right)
=\left(1+\mathcal{D}(1)\right)^{n}
=1+\left(\mathcal{D}(1)\right)^{n}
=1+\mathcal{D}(n)
\] 
for each $j$. 
This implies that $E^{(2)}(t)=1+\mathcal{D}(n)$. 
\end{proof}

\begin{lemma}
\label{lem-lcs}
If $u\in \Gamma_{q}F(A)$, then $E^{(2)}(u)=1+\mathcal{D}(q)$.
\end{lemma}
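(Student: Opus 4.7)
The plan is to prove the statement by induction on $q$, using the fact that $1+\mathcal{D}(q)$ is a multiplicative subgroup of the formal power series ring (since $\mathcal{D}(p)\cdot \mathcal{D}(q)\subseteq \mathcal{D}(p+q)$), so its preimage under the homomorphism $E^{(2)}$ is a subgroup of $F(A)$. It therefore suffices to show that this preimage contains a generating set of $\Gamma_q F(A)$.

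For the base case $q=1$, I would note directly from the definition that $E^{(2)}(\alpha_i)=1+X_i$ and $E^{(2)}(\alpha_i^{-1})=1+X_i+X_i^2+\cdots$ both lie in $1+\mathcal{D}(1)$, so every $w\in F(A)=\Gamma_1 F(A)$ satisfies $E^{(2)}(w)\in 1+\mathcal{D}(1)$. For the inductive step, I would use $\Gamma_{q+1}F(A)=[F(A),\Gamma_q F(A)]$, which is generated (as a subgroup, since it is already normal) by commutators $[a,b]$ with $a\in F(A)$ and $b\in \Gamma_q F(A)$.

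The key computation is then the following: writing $E^{(2)}(a)=1+\alpha$ with $\alpha\in\mathcal{D}(1)$, and by induction $E^{(2)}(b)=1+\beta$ with $\beta\in\mathcal{D}(q)$, one gets
\[
E^{(2)}(ab)-E^{(2)}(ba)=(1+\alpha)(1+\beta)-(1+\beta)(1+\alpha)=\alpha\beta-\beta\alpha\in\mathcal{D}(q+1).
\]
Since $E^{(2)}([a,b])=E^{(2)}(ab)\cdot E^{(2)}(ba)^{-1}=1+(E^{(2)}(ab)-E^{(2)}(ba))\cdot E^{(2)}(ba)^{-1}$, and $E^{(2)}(ba)^{-1}\in 1+\mathcal{D}(0)$, the product lies in $\mathcal{D}(q+1)$, so $E^{(2)}([a,b])\in 1+\mathcal{D}(q+1)$, completing the induction.

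I do not expect any real obstacle: this is the classical Magnus-expansion fact that the lower central filtration is mapped into the augmentation filtration, and everything reduces to the commutator identity above together with the submultiplicativity of the degree filtration. The only care needed is to invoke the right generating set for $\Gamma_{q+1}F(A)$ (commutators of a free-group element with a $\Gamma_q$-element, rather than $q$-fold iterated commutators) so that the inductive hypothesis on $b$ is immediately available.
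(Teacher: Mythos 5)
Your proof is correct. Where the paper disposes of this lemma with a one-line citation of Magnus--Karrass--Solitar \cite[Corollary~5.7]{MKS} (the classical statement that the lower central series is carried into the degree filtration by the Magnus expansion, stated there for integral coefficients, and in fact as an equivalence), you give the standard self-contained induction: $1+\mathcal{D}(q)$ is a multiplicative subgroup since $\mathcal{D}(p)\mathcal{D}(q)\subseteq\mathcal{D}(p+q)$, so it suffices to test generators; $\Gamma_{q+1}F(A)=[F(A),\Gamma_{q}F(A)]$ is generated by commutators $[a,b]$ with $a\in F(A)$, $b\in\Gamma_{q}F(A)$; and $E^{(2)}(ab)-E^{(2)}(ba)=\alpha\beta-\beta\alpha\in\mathcal{D}(q+1)$ gives $E^{(2)}([a,b])\in1+\mathcal{D}(q+1)$ after multiplying by the unit $E^{(2)}(ba)^{-1}$, which cannot lower degree. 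All steps check out (the parenthetical appeal to normality of $[F(A),\Gamma_qF(A)]$ is unnecessary but harmless, and ``$1+\mathcal{D}(0)$'' would more precisely be $1+\mathcal{D}(1)$, though only the absence of negative degrees matters). The trade-off is the usual one: the citation is shorter and invokes a stronger (if and only if) theorem, while your argument is elementary, proves exactly the implication needed, and works directly over $\mathbb{Z}_2$, so no remark about reducing the integral expansion modulo $2$ is required.
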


\begin{proof}
This follows from~\cite[Corollary~5.7]{MKS} directly.  
\end{proof}

\begin{proof}[Proof of Theorem~\ref{thm}]
Since $L$ is $4$-equivalent to the trivial $m$-component link $O$, $\mathcal{R}_{4}(L)$ and $\mathcal{R}_{4}(O)$ are isomorphic. 
By Lemma~\ref{lem-subset}, we have 
\[
\{\left[\alpha_{i},\eta_{q}(\lambda_{i})\right]\mid 1\leq i\leq m\}
\subset\langle\langle S(A)\rangle\rangle\cdot \langle\langle W_{n}(A)\rangle\rangle \cdot\Gamma_{q}F(A).
\] 
This implies that each $\left[\alpha_{i},\eta_{q}(\lambda_{i})\right]$ can be written in the form $stu$ for some $s\in\langle\langle S(A)\rangle\rangle$, $t\in \langle\langle W_{n}(A)\rangle\rangle$, and $u\in\Gamma_{q}F(A)$. 
By Lemmas~\ref{lem-power} and~\ref{lem-lcs}, we have
\[
E^{(2)}(t)=1+\mathcal{D}(n) 
\hspace{1em} \mbox{and} \hspace{1em}
E^{(2)}(u)=1+\mathcal{D}(q).
\]
Since $\min\{n,q\}\geq5$, it is enough to show that $E^{(2)}(s)$ satisfies (i)--(v). 

The element $s\in \langle\langle S(A)\rangle\rangle$ can be written in the form 
$\prod_{j}g_{j}^{-1}s_{j}^{\e_{j}}g_{j}$ for some $g_{j}\in F(A)$, $s_{j}\in S(A)$, and $\e_{j}\in\{1,-1\}$.
Put $E^{(2)}\left(s_{j}^{\e_{j}}\right)=1+S$. 
The Magnus $\mathbb{Z}_{2}$-expansions $E^{(2)}(g_{j})$ and $E^{(2)}(g_{j}^{-1})$ have the same terms of degree~$1$. 
We denote the terms by $G_{1}$. 
By a straightforward computation, it follows that 
\begin{equation}\label{eq-1}
E^{(2)}(g_{j}^{-1}s_{j}^{\e_{j}}g_{j})
=1+S+SG_{1}+G_{1}S+\mathcal{D}(\mbox{(minimal degree of $S$)}+2). 
\end{equation}

Now we consider $E^{(2)}(s_{j}^{\e_{j}})$. 
The element $s_{j}^{\e_{j}}$ can be written in the form 
\[
\left(\alpha_{k}^{\e}w\alpha_{l}^{\de}w^{-1}\right)^{2}\left(w\alpha_{l}^{\de}w^{-1}\alpha_{k}^{\e}\right)^{-2}
\] 
for some $w\in F(A)$, $k,l\in\{1,\ldots,m\}$, and $\e,\de\in\{1,-1\}$.
We define a formal power series $f_{r}^{\e}$ as 
\[
f_{r}^{\e}=
\begin{cases}
0 & (\e=1), \\
X_{r}^{2}+X_{r}^{3}+X_{r}^{4}+\cdots &(\e=-1).
\end{cases}
\] 
Putting $E^{(2)}(w)=1+F$ and $E^{(2)}(w^{-1})=1+\overline{F}$, 
we have
\begin{eqnarray*}
&&E^{(2)}\left(\alpha_{k}^{\e}w\alpha_{l}^{\de}w^{-1}\right) \\
&&=\left(1+X_{k}+f_{k}^{\e}\right)(1+F)\left(1+X_{l}+f_{l}^{\de}\right)(1+\overline{F}) \\
&&=\left(1+X_{k}+f_{k}^{\e}\right)
\left(1+X_{l}+X_{l}\overline{F}+f_{l}^{\de}+f_{l}^{\de}\overline{F}+FX_{l}+FX_{l}\overline{F}+Ff_{l}^{\de}+Ff_{l}^{\de}\overline{F}\right) \\
&&=1+X_{k}+X_{l}+P_{1}, 
\end{eqnarray*}
where $P_{1}=X_{k}X_{l}+f_{k}^{\e}+f_{l}^{\de}+FX_{l}+X_{l}\overline{F}+X_{k}FX_{l}+X_{k}X_{l}\overline{F}+FX_{l}\overline{F}+X_{k}f_{l}^{\de}+f_{k}^{\e}X_{l}+Ff_{l}^{\de}+f_{l}^{\de}\overline{F}+\mathcal{D}(4)$. 
Similarly we have 
\[
E^{(2)}\left(\left(w\alpha_{l}^{\de}w^{-1}\alpha_{k}^{\e}\right)^{-1}\right) 
=E^{(2)}\left(\alpha_{k}^{-\e}w\alpha_{l}^{-\de}w^{-1}\right) 
=1+X_{k}+X_{l}+P_{2}, 
\]
where $P_{2}$ is obtained from $P_{1}$ by replacing $\e$ and $\de$ with $-\e$ and $-\de$, respectively. 
Then it follows that 
\begin{eqnarray*}
E^{(2)}(s_{j}^{\e_{j}})
&=&\left(1+X_{k}+X_{l}+P_{1}\right)^{2}\left(1+X_{k}+X_{l}+P_{2}\right)^{2} \\
&=&\left(1+(X_{k}+X_{l})^{2}+\left(X_{k}+X_{l}\right)P_{1}+P_{1}\left(X_{k}+X_{l}\right)+P_{1}^{2}\right) \\
&& 
\times\left(1+(X_{k}+X_{l})^{2}+\left(X_{k}+X_{l}\right)P_{2}+P_{2}\left(X_{k}+X_{l}\right)+P_{2}^{2}\right). 
\end{eqnarray*}
For $r=1,2$ we put $Q_{r}=(X_{k}+X_{l})P_{r}+P_{r}(X_{k}+X_{l})+P_{r}^{2}$~$(\in\mathcal{D}(3))$ and therefore have 
\begin{equation}\label{eq-2}
E^{(2)}(s_{j}^{\e_{j}})
=1+(X_{k}+X_{l})^{4}+Q_{1}+Q_{2}+\mathcal{D}(5). 
\end{equation}
Here we observe $Q_{1}+Q_{2}=(X_{k}+X_{l})(P_{1}+P_{2})+(P_{1}+P_{2})(X_{k}+X_{l})+P_{1}^{2}+P_{2}^{2}$. 
First we focus on $(X_{k}+X_{l})(P_{1}+P_{2})+(P_{1}+P_{2})(X_{k}+X_{l})$. 
Since $f_{k}^{\e}+f_{k}^{-\e}=X_{k}^{2}+X_{k}^{3}+\cdots$ and $f_{l}^{\de}+f_{l}^{-\de}=X_{l}^{2}+X_{l}^{3}+\cdots$, we have 
\[
P_{1}+P_{2}=X_{k}^{2}+X_{l}^{2}+X_{k}^{3}+X_{l}^{3}+X_{k}X_{l}^{2}+X_{k}^{2}X_{l}+F_{1}X_{l}^{2}+X_{l}^{2}F_{1}+\mathcal{D}(4), 
\] 
where $F_{1}$ denotes the terms of degree $1$ in $F$. 
Note that $F_{1}$ also denotes the terms of degree $1$ in $\overline{F}$. 
This implies that 
\begin{eqnarray*}
&&(X_{k}+X_{l})(P_{1}+P_{2})+(P_{1}+P_{2})(X_{k}+X_{l}) \\
&&=X_{k}X_{l}^{2}+X_{l}X_{k}^{2}+X_{k}^{2}X_{l}+X_{l}^{2}X_{k} \\
&&\hspace{1em}
+X_{l}X_{k}^{3}+X_{l}X_{k}^{2}X_{l}+X_{l}X_{k}X_{l}^{2}+X_{k}^{2}X_{l}X_{k}+X_{k}X_{l}^{2}X_{k}+X_{l}^{3}X_{k} 
\\
&&\hspace{1em}
+X_{k}F_{1}X_{l}^{2}+X_{k}X_{l}^{2}F_{1}+X_{l}F_{1}X_{l}^{2}+X_{l}^{3}F_{1} \\
&&\hspace{1em}
+F_{1}X_{l}^{2}X_{k}+F_{1}X_{l}^{3}+X_{l}^{2}F_{1}X_{k}+X_{l}^{2}F_{1}X_{l}
+\mathcal{D}(5).  
\end{eqnarray*}
Next we focus on $P_{1}^{2}+P_{2}^{2}$. 
It follows that 
\begin{eqnarray*}
P_{1}^{2}&=&(
X_{k}X_{l}+F_{1}X_{l}+X_{l}F_{1}+f_{k}^{\e}+f_{l}^{\de}+\mathcal{D}(3))^{2} \\
&=&(X_{k}X_{l}+F_{1}X_{l}+X_{l}F_{1})^{2}
+(X_{k}X_{l}+F_{1}X_{l}+X_{l}F_{1})(f_{k}^{\e}+f_{l}^{\de}) \\
&&+(f_{k}^{\e}+f_{l}^{\de})(X_{k}X_{l}+F_{1}X_{l}+X_{l}F_{1})+(f_{k}^{\e}+f_{l}^{\de})^{2}+\mathcal{D}(5) 
\end{eqnarray*}
and 
\begin{eqnarray*}
P_{2}^{2}&=&(X_{k}X_{l}+F_{1}X_{l}+X_{l}F_{1})^{2}
+(X_{k}X_{l}+F_{1}X_{l}+X_{l}F_{1})(f_{k}^{-\e}+f_{l}^{-\de}) \\
&&+(f_{k}^{-\e}+f_{l}^{-\de})(X_{k}X_{l}+F_{1}X_{l}+X_{l}F_{1})+(f_{k}^{-\e}+f_{l}^{-\de})^{2}+\mathcal{D}(5). 
\end{eqnarray*}
This implies that 
\begin{eqnarray*}
&&P_{1}^{2}+P_{2}^{2} \\
&&=(X_{k}X_{l}+F_{1}X_{l}+X_{l}F_{1})(X_{k}^{2}+X_{l}^{2}) +(X_{k}^{2}+X_{l}^{2})(X_{k}X_{l}+F_{1}X_{l}+X_{l}F_{1}) \\
&&\hspace{1em}
+\mathcal{D}(5)
+X_{k}^{4}+X_{l}^{4}
+\begin{cases}
X_{k}^{2}X_{l}^{2}+X_{l}^{2}X_{k}^{2} & (\e\de=1), \\
0 & (\e\de=-1). 
\end{cases}
\end{eqnarray*}
Therefore we have 
\begin{eqnarray}\label{eq-3} 
Q_{1}+Q_{2}
&=&X_{k}X_{l}^{2}+X_{l}X_{k}^{2}+X_{k}^{2}X_{l}+X_{l}^{2}X_{k} \\ \nonumber
&&
+X_{k}X_{l}X_{k}X_{l}+X_{l}X_{k}X_{l}X_{k}+I_{kl}(\e,\de)+(X_{k}+X_{l})^{4} \\ \nonumber
&&
+X_{k}F_{1}X_{l}^{2}+X_{k}X_{l}^{2}F_{1}+F_{1}X_{l}^{2}X_{k}+X_{l}^{2}F_{1}X_{k} \\ \nonumber
&&
+F_{1}X_{l}X_{k}^{2}+X_{l}F_{1}X_{k}^{2}+X_{k}^{2}F_{1}X_{l}+X_{k}^{2}X_{l}F_{1}
+\mathcal{D}(5),
\end{eqnarray}
where 
\[
I_{kl}(\e,\de)=
\begin{cases}
0 & (\e\de=1), \\
X_{k}^{2}X_{l}^{2}+X_{l}^{2}X_{k}^{2} & (\e\de=-1). 
\end{cases}
\]
By Equations~(\ref{eq-2}) and~(\ref{eq-3}), we have 
\begin{eqnarray*}
E^{(2)}\left(s_{j}^{\e_{j}}\right)&=&1+X_{k}X_{l}^{2}+X_{l}X_{k}^{2}+X_{k}^{2}X_{l}+X_{l}^{2}X_{k} \\
&&
+X_{k}X_{l}X_{k}X_{l}+X_{l}X_{k}X_{l}X_{k}+I_{kl}(\e,\de) \\
&&
+X_{k}F_{1}X_{l}^{2}+X_{k}X_{l}^{2}F_{1}+F_{1}X_{l}^{2}X_{k}+X_{l}^{2}F_{1}X_{k} \\
&&
+F_{1}X_{l}X_{k}^{2}+X_{l}F_{1}X_{k}^{2}+X_{k}^{2}F_{1}X_{l}+X_{k}^{2}X_{l}F_{1}
+\mathcal{D}(5). 
\end{eqnarray*}

Applying this to Equation~(\ref{eq-1}), it follows that 
\begin{eqnarray*}
E^{(2)}\left(g_{j}^{-1}s_{j}^{\e_{j}}g_{j}\right)&=&1+X_{k}X_{l}^{2}+X_{l}X_{k}^{2}+X_{k}^{2}X_{l}+X_{l}^{2}X_{k} \\
&&
+X_{k}X_{l}X_{k}X_{l}+X_{l}X_{k}X_{l}X_{k}+I_{kl}(\e,\de) \\
&&
+X_{k}F_{1}X_{l}^{2}+X_{k}X_{l}^{2}F_{1}+F_{1}X_{l}^{2}X_{k}+X_{l}^{2}F_{1}X_{k} \\
&&
+F_{1}X_{l}X_{k}^{2}+X_{l}F_{1}X_{k}^{2}+X_{k}^{2}F_{1}X_{l}+X_{k}^{2}X_{l}F_{1} \\
&&
+X_{k}X_{l}^{2}G_{1}+X_{l}X_{k}^{2}G_{1}+X_{k}^{2}X_{l}G_{1}+X_{l}^{2}X_{k}G_{1} \\
&&
+G_{1}X_{k}X_{l}^{2}+G_{1}X_{l}X_{k}^{2}+G_{1}X_{k}^{2}X_{l}+G_{1}X_{l}^{2}X_{k}
+\mathcal{D}(5).
\end{eqnarray*}
This shows that $E^{(2)}\left(g_{j}^{-1}s_{j}^{\e_{j}}g_{j}\right)$ satisfies (i)--(iv) for each $j$, and hence $E^{(2)}(s)=E^{(2)}\left(\prod_{j}g_{j}^{-1}s_{j}^{\e_{j}}g_{j}\right)$ also satisfies them. 

Let $\mathcal{O}(2)$ be the ideal generated by monomials containing $X_{r}$ at least twice for some $r~(=1,\ldots, m)$. 
Then we have 
\begin{eqnarray*}
&&E^{(2)}\left(\alpha_{k}^{\e}w\alpha_{l}^{\de}w^{-1}\right) \\
&&\equiv (1+X_{k})(1+F)(1+X_{l})\left(1+\overline{F}\right)
\equiv E^{(2)}\left((w\alpha_{l}^{\de}w^{-1}\alpha_{k}^{\e})^{-1}\right)
\pmod{\mathcal{O}(2)}. 
\end{eqnarray*}
This implies that 
\[
E^{(2)}(s_{j}^{\e_{j}})
\equiv\left((1+X_{k})(1+F)(1+X_{l})\left(1+\overline{F}\right)\right)^{4}
\equiv1 \pmod{\mathcal{O}(2)}.
\]
Consequently, $E^{(2)}(s)-1\in\mathcal{O}(2)$, and hence satisfies (v). 
\end{proof}

\begin{remark}\label{rem-Milnor}
Given a diagram $D$ of an $m$-component link $L$ and a sequence $I$ of elements in $\{1,\ldots,m\}$, Milnor~\cite{M54,M57} defined an integer $\mu_{D}(I)$ 
and proved that the residue class $\omu_{L}(I)$ of $\mu_{D}(I)$, modulo an integer $\Delta_{D}(I)$ determined by the $\mu$, is an invariant of the link $L$. 
Let $\lambda_{i}$ be an $i$th longitude word obtained from~$D$. 
If $\Delta_{D}(k_{1}\ldots k_{j}\,i)=0$, then by definition $\mu_{D}(k_{1}\ldots k_{j}\,i)$ modulo~$2$ coincides with the coefficient of $X_{k_{1}}\cdots X_{k_{j}}$ in $E^{(2)}(\alpha_{i}^{-w_{i}}\eta_{q}(\lambda_{i}))$, 
where $w_{i}$ denotes the sum of signs of all self-crossings of the $i$th component of $D$. 
Hence, a straightforward computation gives that the coefficient of $X_{i}X_{k_{1}}\cdots X_{k_{j}}$ in $E^{(2)}([\alpha_{i},\eta_{q}(\lambda_{i})])=E^{(2)}([\alpha_{i},\alpha_{i}^{-w_{i}}\eta_{q}(\lambda_{i})])$ coincides with $\mu_{D}(k_{1}\ldots k_{j}\,i)$ modulo~$2$. 
\end{remark}

\begin{proposition}\label{prop-ML}
Let $L_{m}$ be the $m$-component link represented by a diagram as shown in Figure~\ref{ML}. 
Then $L_{m}$ and the trivial $m$-component link are not $4$-equivalent. 
\end{proposition}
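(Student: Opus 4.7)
The strategy is to apply Theorem~\ref{thm} with $n$ a sufficiently large power of $2$ and $q$ a sufficiently large integer (concretely, pick $n = 2^r$ and $q$ with $\min\{n,q\} > m$ and $n \geq 8$, $q\geq 5$), and derive a contradiction from condition~(v) via the Milnor invariant interpretation recalled in Remark~\ref{rem-Milnor}. If $L_m$ were $4$-equivalent to the trivial $m$-component link, then for any pairwise distinct tuple $(k_1,\ldots,k_j)$ with $j<\min\{n,q\}$ the coefficient $c(k_1,\ldots,k_j)$ appearing in the Magnus $\mathbb{Z}_2$-expansion of $[\alpha_i,\eta_q(\lambda_i)]$ would have to vanish. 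The plan is to exhibit such a tuple of length $m$ for which this coefficient is $1\in\mathbb{Z}_2$.

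First, I would read off from the diagram of $L_m$ in Figure~\ref{ML} an $m$th longitude word $\lambda_m$ together with the self-writhe $w_m$, and identify the link as a Milnor-type (Brunnian) chain: all Milnor $\bar{\mu}$-invariants of length strictly less than $m$ vanish, while the length-$m$ invariant $\bar{\mu}_{L_m}(1\,2\cdots m)$ is odd (for the standard Milnor link this is $\pm 1$). Because all proper sub-sequence Milnor invariants vanish, the indeterminacy $\Delta_D(1\,2\cdots(m-1)\,m)$ is zero, so $\mu_D(1\,2\cdots(m-1)\,m)$ is a well-defined integer equal to $\bar{\mu}_{L_m}(1\,2\cdots m)$.

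Second, I would invoke Remark~\ref{rem-Milnor}: since $\Delta_D(1\,2\cdots(m-1)\,m)=0$, the coefficient of $X_1X_2\cdots X_{m-1}$ in $E^{(2)}\bigl(\alpha_m^{-w_m}\eta_q(\lambda_m)\bigr)$ is $\mu_D(1\,2\cdots(m-1)\,m)\bmod 2=1$, and consequently the coefficient $c(m,1,2,\ldots,m-1)$ of $X_m X_1X_2\cdots X_{m-1}$ in $E^{(2)}\bigl([\alpha_m,\eta_q(\lambda_m)]\bigr)$ is also $1\in\mathbb{Z}_2$. The indices $m,1,2,\ldots,m-1$ are pairwise distinct and their number $j=m$ satisfies $j<\min\{n,q\}$ by our choice. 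Condition~(v) of Theorem~\ref{thm} therefore forces $c(m,1,2,\ldots,m-1)=0$, a contradiction.

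The principal obstacle is the first step: extracting a sharp enough expression for $\eta_q(\lambda_m)$ from the diagram to verify both that all shorter Milnor invariants vanish (so that the indeterminacy disappears) and that the length-$m$ invariant is odd. For the classical Milnor link this can be handled inductively from its Brunnian presentation, and all subsequent steps are a direct bookkeeping exercise combining Remark~\ref{rem-Milnor} with clause~(v) of Theorem~\ref{thm}.
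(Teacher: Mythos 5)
Your plan is correct and follows essentially the same route as the paper: the paper likewise verifies (citing Milnor's original computation in \cite{M54}, Section~5) that $\Delta_{D_m}(I)=0$ for all sequences of length $\leq m$ and that $\mu_{D_m}(1\,2\cdots(m-2)\,m\!-\!1\,m)=1$, then uses Remark~\ref{rem-Milnor} to identify the coefficient of $X_{m}X_{1}\cdots X_{m-1}$ in $E^{(2)}([\alpha_{m},\eta_{q}(\lambda_{m})])$ as $1$ and concludes by Theorem~\ref{thm}(v). The step you flag as the principal obstacle is handled in the paper simply by that citation, so no further work is needed.
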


\begin{proof}
Let $D_{m}$ be the diagram in Figure~\ref{ML} and 
$\sigma$ any permutation of the set $\{1,2,\ldots,m-2\}$. 
It follows from~\cite[Section~5]{M54} that 
$\Delta_{D_{m}}(I)=0$ for any sequence~$I$ with length~$\leq m$ and 
\[
\omu_{L_{m}}(\sigma(1)\ldots\sigma(m-2)\,m-1\,m)=
\mu_{D_{m}}(\sigma(1)\ldots\sigma(m-2)\,m-1\,m)=
\begin{cases}
1 & (\sigma=\mbox{id}), \\
0 & (\mbox{otherwise}).
\end{cases}
\]
Let $\lambda_{m}$ be an $m$th longitude word obtained from $D_{m}$. 
By Remark~\ref{rem-Milnor}, 
the coefficient of $X_{m}X_{1}X_{2}\cdots X_{m-1}$ in $E^{(2)}([\alpha_{m},\eta_{q}(\lambda_{m})])$ is equal to $1$ for $\min{\{n,q\}}\geq~m$. 
Therefore Theorem~\ref{thm}(v) completes the proof. 
\end{proof}

\begin{figure}[htbp]
  \begin{center}
    \begin{overpic}[width=9cm]{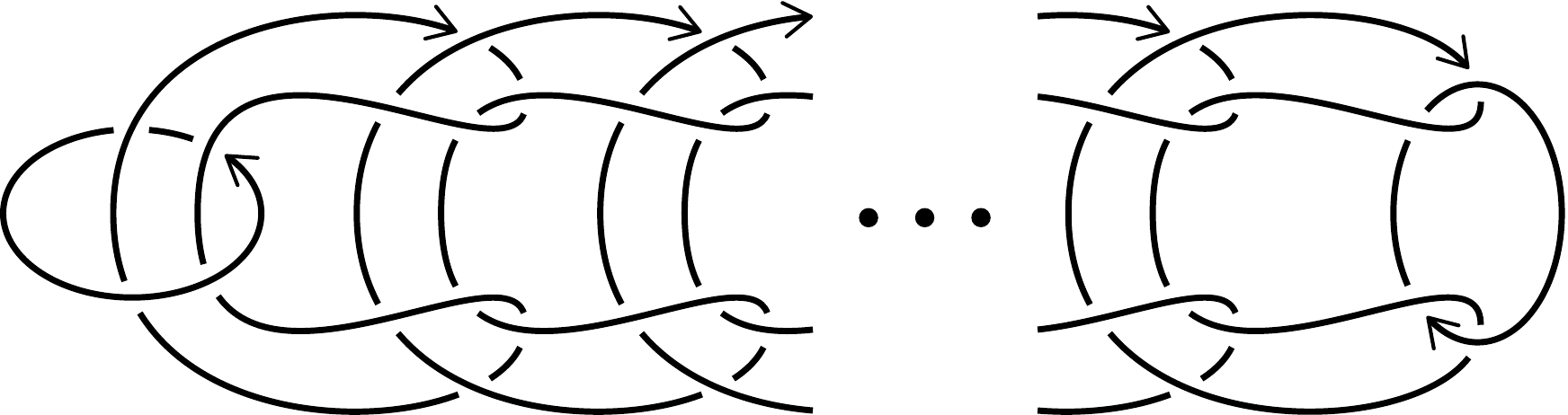}
      \put(2,14){$m$}
      \put(52,-12){$1$}
      \put(94,-12){$2$}
      \put(127,-12){$3$}
      \put(205,-12){$m-2$}
      \put(257,14){$m-1$}
    \end{overpic}
  \end{center}
  \vspace{1em}
  \caption{}
  \label{ML}
\end{figure}

\begin{remark}
(i)~
In~\cite{N}, Nakanishi showed Proposition~\ref{prop-ML} only for the case~$m=3$. 
(Note that $L_{3}$ is the Borromean rings.)
In~\cite[Corollary~2.3]{DS}, Dabkowski and Sahi obtained the same result by using the invariant $\mathcal{R}_{4}$. 

\noindent
(ii)~
Proposition~\ref{prop-ML} also follows from~\cite[Proposition~6.2]{MWY-2nlh} directly. 
Moreover, it follows that $L_{m}$ and the trivial $m$-component link cannot be deformed into each other by a finite sequence of $4$-moves and link-homotopies. 
\end{remark}

%%%%%%%%%% Welded links %%%%%%%%%%
\section{Welded links}
An {\em $m$-component virtual link diagram} is the image of an immersion of $m$ circles into the plane, whose singularities are only transverse double points. 
Such double points are divided into {\em classical crossings} and {\em virtual crossings}. 
{\em Welded Reidemeister moves} consist of eight types of local moves. 
An {\em $m$-component welded link} is an equivalence class of $m$-component virtual link diagrams under welded Reidemeister moves (cf.~\cite{FRR}). 

An {\em arc} of a virtual link diagram $D$ is a segment along $D$ which goes from a classical under-crossing to the next one, 
where virtual crossings are ignored. 
The {\em group} of $D$ is defined by the Wirtinger presentation, i.e. 
each arc of $D$ yields a generator and each classical crossing gives a relation. 
Note that no new generators or relation are added at a virtual crossing.
It is easily shown that the group of $D$ is an invariant of welded links (cf.~\cite[Section~4]{Kauffman}). 
The {\em group} of a welded link is given by the group for any virtual link diagram of the welded link. 
Given a welded link $L$, we similarly define the Dabkowski-Sahi invariant $\mathcal{R}_{4}(L)$ and the reduced one $\mathcal{R}_{4}^{n}(L)$, via the group of $L$. 
Clearly, the same result of Theorem~\ref{thm} also holds for welded links. 
Even when applying it, we will refer Theorem~\ref{thm}. 

The {\em $(i,j)$-linking number $\mbox{{\rm lk}}_{i/j}$} of a welded link is the sum of signs, in a diagram for the welded link, of all classical crossings where the $i$th component passes over the $j$th one (cf.~\cite[Section~1.7]{GPV}). 
For classical links in $S^{3}$, the half of $\mbox{lk}_{i/j}+\mbox{lk}_{j/i}$ coincides with the usual linking number between the $i$th and $j$th components.

\begin{proposition}\label{prop-Whitehead}
There is a $2$-component welded link with $\mbox{{\rm lk}}_{1/2}=\mbox{{\rm lk}}_{2/1}=0$ such that the welded link and the trivial $2$-component link are not $4$-equivalent. 
\end{proposition}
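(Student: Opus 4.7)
The plan is to exhibit a specific 2-component welded link $L$ with $\text{lk}_{1/2}(L) = \text{lk}_{2/1}(L) = 0$ and then apply the welded version of Theorem~\ref{thm} to obstruct its 4-triviality. The natural candidate is a welded analogue of the Whitehead link, constructed from a virtual diagram in which the pattern of classical crossings makes both one-sided linking numbers vanish while the Wirtinger longitude retains a nontrivial commutator structure. From this diagram I would read off a Wirtinger presentation of the group of $L$ following the conventions of Section~\ref{sec-DSinv} and write down an explicit longitude word $\lambda_1 \in F(A) = F(\alpha_1, \alpha_2)$ for the first component. Because the one-sided linking numbers vanish, $\lambda_1$ lies in the commutator subgroup, so $E^{(2)}(\eta_q(\lambda_1)) = 1 + \mathcal{D}(2)$.

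Next, fixing $n = 8$ and $q = 5$, I would expand $E^{(2)}([\alpha_1, \eta_q(\lambda_1)])$ up to degree $4$ and look for a coefficient violating one of the conditions (i)--(v) of Theorem~\ref{thm}. For $m = 2$ condition (iv) is vacuous and condition (v) at pairwise distinct indices reduces to the vanishing of the linking numbers (already arranged), so the interesting constraints are (i), (ii), and (iii). The most natural target is condition (ii): the four degree-3 coefficients $c(1,1,2), c(1,2,2), c(2,1,1), c(2,2,1)$ must all coincide. For a classical 2-component link with linking number $0$ these four quantities equal $\bar\mu(112) \bmod 2$ (which itself vanishes for classical 2-component links), but for welded links the Wirtinger longitude picks up one-sided contributions at virtual crossings that can break this symmetry. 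Remark~\ref{rem-Milnor} identifies the coefficients of monomials $X_1 X_{k_1} \cdots X_{k_j}$ with Milnor-type invariants mod $2$ and thus allows a clean computation of three of these four quantities; the fourth is computed directly from the Magnus expansion.

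The main obstacle is the explicit construction and bookkeeping: choosing the diagram so that $\lambda_1$ is short enough to make the degree-$3$ and degree-$4$ terms of $E^{(2)}([\alpha_1, \eta_q(\lambda_1)])$ tractable, and then verifying by a direct computation that some coefficient violates one of the prescribed equalities. Should condition (ii) fail to be triggered by the chosen example, condition (iii), which at degree $4$ requires $c(1,1,2,2) = c(2,2,1,1)$ and so forth, offers the next target, mirroring the nonvanishing of $\bar\mu(1122)$ for the classical Whitehead link. Once a single violating coefficient is exhibited, the proposition follows immediately from Theorem~\ref{thm} applied in the welded setting as explained at the start of this section.
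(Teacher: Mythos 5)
Your strategy is exactly the one the paper uses: take a welded (virtualized) Whitehead-type diagram with $\mbox{\rm lk}_{1/2}=\mbox{\rm lk}_{2/1}=0$, apply the welded version of Theorem~\ref{thm}, and exploit the fact that virtual crossings can break the degree-$3$ symmetry of condition~(ii), which is forced for classical $2$-component links with vanishing linking number. That key mechanism is correctly identified. However, as written this is a plan rather than a proof of the existence statement: you never exhibit the diagram, never write down a longitude word, and never compute a single coefficient; you even hedge about whether condition~(ii) or condition~(iii) will be the one violated. For a proposition of the form ``there is a welded link such that \dots,'' the explicit witness together with the verified violation of one of (i)--(v) \emph{is} the proof, so the substantive content is missing.

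For comparison, the paper's example (Figure~\ref{Whitehead}) is a Whitehead-link diagram in which two classical crossings are made virtual, so the diagram has only four arcs $x_{11},x_{12},x_{21},x_{22}$. One then reads off $\lambda_{1}=x_{21}^{-1}x_{21}$ and $\lambda_{2}=x_{11}x_{12}^{-1}$, so that
\[
[\alpha_{2},\eta_{5}(\lambda_{2})]=\alpha_{2}^{-1}\alpha_{1}\alpha_{2}^{-1}\alpha_{1}^{-1}\alpha_{2}\alpha_{1}\alpha_{2}\alpha_{1}^{-1},
\]
and a direct Magnus $\mathbb{Z}_{2}$-expansion shows that the coefficient of $X_{1}X_{2}X_{2}$ is $1$ while that of $X_{2}X_{1}X_{1}$ is $0$, contradicting Theorem~\ref{thm}(ii). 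To complete your argument you would need to commit to such a concrete diagram (with both one-sided linking numbers zero), extract the longitude word, and carry out this short computation; also note that the paper works directly with the commutator $[\alpha_{i},\eta_{q}(\lambda_{i})]$ rather than routing the degree-$3$ coefficients through Remark~\ref{rem-Milnor}, whose identification with Milnor numbers is stated for classical diagrams and is not needed here.
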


\begin{proof}
Let $L$ be the $2$-component welded link represented by a virtual link diagram as shown in Figure~\ref{Whitehead}. 
By definition, $\mbox{{\rm lk}}_{1/2}=\mbox{{\rm lk}}_{2/1}=0$. 
Now consider the arcs $x_{ij}$ of the diagram in Figure~\ref{Whitehead}. 
Since $\lambda_{1}=x_{21}^{-1}x_{21}$ and $\lambda_{2}=x_{11}x_{12}^{-1}$, 
we have 
\[
[\alpha_{2},\eta_{5}(\lambda_{2})]=
\alpha_{2}^{-1}\alpha_{1}\alpha_{2}^{-1}\alpha_{1}^{-1}
\alpha_{2}\alpha_{1}\alpha_{2}\alpha_{1}^{-1}. 
\]
After computing $E^{(2)}(\left[\alpha_{2},\eta_{5}(\lambda_{2})\right])$, we see that the coefficient of $X_{1}X_{2}X_{2}$ is $1$ and that of $X_{2}X_{1}X_{1}$ is $0$. 
By Theorem~\ref{thm}(ii), $L$ is not $4$-equivalent to the trivial $2$-component link. 
\end{proof}

\begin{figure}[htbp]
  \begin{center}
    \begin{overpic}[width=3.5cm]{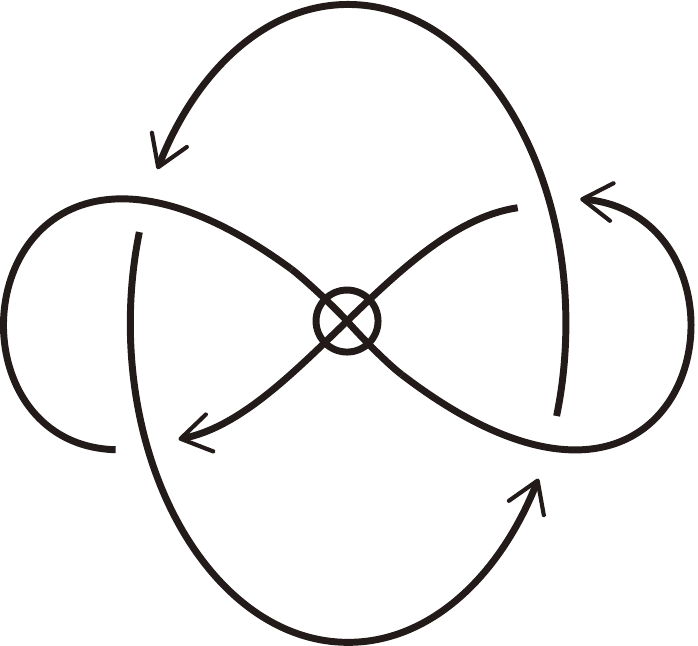}
      \put(-16,45){$x_{21}$}
      \put(51,66){$x_{22}$}
      \put(72,86){$x_{11}$}
      \put(18,2){$x_{12}$}
    \end{overpic}
  \end{center}
  \caption{}
  \label{Whitehead}
\end{figure}

\begin{remark}
The $4$th Burnside group of classical links, defined by Dabkowski and Przytycki in~\cite{DP02}, naturally extends to welded links. 
Although we do not give here the precise definition,
one can easily verify that the $4$th Burnside group of the welded link given in Proposition~\ref{prop-Whitehead} is isomorphic to that of the trivial $2$-component link. 
\end{remark}

\end{document}